\newcommand{\Aut}{\mathrm{Aut}}
\newcommand{\Gal}{\mathrm{Gal}}
\newcommand{\m}{\mathfrak{m}}
\newcommand{\F}{\mathds F}
\newcommand{\C}{\mathds C}
\newcommand{\lang}{\longrightarrow}
\renewcommand{\O}{\mathcal{O}}
\newcommand{\cX}{{\mathscr X}}
\newcommand{\cY}{{\mathscr Y}}
\newcommand{\kiso}{\mathrel{\hbox{$\rightarrow$} \kern-1.5ex\lower-1ex\hbox{$\scriptstyle\sim$}\kern.5ex}}
\newcommand{\liso}{\mathrel{\hbox{$\longrightarrow$} \kern-2.4ex\lower-1ex\hbox{$\scriptstyle\sim$}\kern1.7ex}}
\newtheoremstyle{alexthm}
  {}
  {}
  {\sl }
  {}
  {\bf}
  {.}
  {.5em}
  {}
\theoremstyle{alexthm}
\newtheorem{itheorem}{Theorem}
\newtheorem{theorem}{Theorem}[section]
\newtheorem*{theorem*}{Theorem}
\newtheorem{corollary}[theorem]{Corollary}
\newtheorem{lemma}[theorem]{Lemma}
\newtheorem*{lemma*}{Lemma}
\newtheoremstyle{alexdef}
  {}
  {}
  {\rm }
  {}
  {\bf}
  {.}
  {.5em}
  {}
\theoremstyle{alexdef}
\newtheorem*{example*}{Example}
\newtheorem{example}[theorem]{Example}
\newtheorem{remark}[theorem]{Remark}
\newtheorem{definition}[theorem]{Definition}
\DeclareMathOperator{\Spec}{\textit{Spec}}
\DeclareMathOperator{\ch}{char}
\DeclareMathOperator{\Val}{Val}
\DeclareMathOperator{\Spa}{Spa}
\title{\bf On quasi-purity of the branch locus}
\author{Alexander Schmidt}
\address{Mathematisches  Institut, Universit\"{a}t Heidelberg, Im Neuenheimer Feld 205, 69120 Heidelberg, Deutschland}
\email{schmidt@mathi.uni-heidelberg.de}
\date{\today}
\begin{document}
\maketitle

The notion of ramification is classical and important in arithmetic geometry. There are essentially two different approaches: the valuation theoretic notion of ramification and the scheme (or ring) theoretic one. In case of an extension of Dedekind domains, both notions coincide since regular local rings of dimension one are exactly the valuation rings of discrete rank one valuations. In higher dimension or without regularity assumptions the two notions of ramification diverge.

Let $K/k$ be a finitely generated field extension. A $k$-valuation of $K$ is a valuation $v$ on $K$ which is trivial on $k$. We call a  normal, connected scheme $X/k$ separated and of finite type with function field $K$ a  \emph{model} of $K$. The normalization of $X$ in a finite, separable extension $L/K$ is denoted by $X_L$.
The main result of this paper is the following

\begin{itheorem} [Quasi-purity of the branch locus] \label{qp}
Let $L/K$ be a finite separable extension which is ramified at some $k$-valuation $w$ of $L$. Then there exists a model $X$ of $K$ and a (Weil) prime divisor $D\subset X_L$ which is ramified in the scheme morphism $X_L\to X$.
\end{itheorem}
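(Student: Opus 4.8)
\emph{Proof strategy (sketch).} I would argue by contraposition: assume that for \emph{every} model $X$ of $K$ the finite morphism $X_L\to X$ is unramified in codimension one --- i.e.\ no prime divisor of $X_L$ is ramified over $X$ --- and deduce that every $k$-valuation of $L$ is unramified over $K$, contradicting the hypothesis on $w$. Write $v=w|_K$.

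The crucial step is to reduce everything to the classical Zariski--Nagata purity theorem on a \emph{regular} model. Choose a proper model $X_0$ of $K$ and, by resolution of singularities, a proper regular model $X\to X_0$; let $Z$ be the centre of $v$ on $X$, so that $\O_{X,Z}$ is a regular local ring, and note that, $X$ being proper, $w$ has a centre $W$ on $X_L$ whose image on $X$ is $Z$. Let $S$ be the integral closure of $\O_{X,Z}$ in $L$; then $\Spec S\to\Spec\O_{X,Z}$ is obtained from $X_L\to X$ by the flat base change $\Spec\O_{X,Z}\to X$, hence it is finite, $S$ is normal, and it is \'etale in codimension one by our assumption. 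Since $\O_{X,Z}$ is regular, Zariski--Nagata purity of the branch locus now forces $\Spec S\to\Spec\O_{X,Z}$ to be finite \'etale.

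It remains to pass from this to unramifiedness at $w$, which I would do by a further base change, along $\O_{X,Z}\hookrightarrow\O_v$. Because $\O_v$ is a normal valuation ring, $S\otimes_{\O_{X,Z}}\O_v$ is a normal domain, finite and \'etale over $\O_v$, with field of fractions $L$; hence it equals the integral closure $\O_v'$ of $\O_v$ in $L$, and $\O_v'/\O_v$ is finite \'etale. Consequently $\m_v\O_v'$ is a radical ideal and the residue algebra $\O_v'\otimes_{\O_v}\kappa(v)$ is a finite product of separable field extensions of $\kappa(v)$; this says exactly that \emph{every} extension of $v$ to $L$, in particular $w$, is unramified over $K$. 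This contradiction proves the theorem.

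I expect the main obstacle to be the appeal to resolution of singularities needed to pass to a proper regular model: this is unconditional in characteristic zero (Hironaka) but open in positive characteristic beyond low dimensions. To treat the positive characteristic case one would have to replace that step, for instance by first reducing --- using multiplicativity of the ramification index, residue degree and defect in towers of valuations --- to a situation in which $\O_{X,Z}$ can be made regular by the local uniformization results that are available (notably the Knaf--Kuhlmann theorem for Abhyankar valuations), or by a direct induction over blow-ups of the branch locus; making such a reduction work, together with the attendant bookkeeping about defect, is the delicate point. The remaining ingredients --- the base-change compatibilities of integral closure and of "\'etale in codimension one", and the finiteness of $S$ over the excellent ring $\O_{X,Z}$ --- are routine.
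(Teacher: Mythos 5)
Your argument in the presence of a regular proper model is essentially correct and is the same local argument the paper uses (normalize the regular local ring at the centre of $v$, apply Zariski--Nagata purity, and transport \'{e}taleness back to the valuation ring; compare \cref{etale} and \cref{ramover}). But the paper itself points out that under the hypothesis of resolution of singularities the theorem is a straightforward consequence of Zariski--Nagata, and the entire content of the result lies in the positive-characteristic case, which your proposal does not actually handle. The remedies you sketch would not work as stated: the Knaf--Kuhlmann local uniformization theorem applies only to \emph{Abhyankar} valuations, whereas the valuation $w$ in the statement is completely arbitrary (and reducing an arbitrary ramified valuation to a divisorial one is precisely what the theorem is asserting, so you cannot assume it); and "a direct induction over blow-ups of the branch locus" is exactly the unsolved local uniformization problem in positive characteristic. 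The worry about "bookkeeping about defect" also suggests a reduction along towers of valued fields that the paper never needs.

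The missing idea is the following two-step device. First, Temkin's \emph{inseparable} local uniformization theorem provides, unconditionally, a regular model $X'$ with function field a finite \emph{purely inseparable} extension $K'/K$ on which (the unique extension $v'$ of) $v$ has a centre. Second --- and this is the key observation that makes the detour through $K'$ harmless --- ramification theory is insensitive to purely inseparable base change: since $\Gal(\bar K/K_\infty)\liso\Gal(K^s/K)$, the decomposition, inertia and ramification groups of a valuation are canonically preserved when one passes from $L/K$ to $L'=LK'$ over $K'$ (\cref{ramfilt}). Hence $w'/v'$ is still ramified, your regular-model argument applies over $X'$ to produce a ramified divisorial valuation $W'$ of $L'$, and restricting back to $L$ (using \cref{ramfilt} once more) yields the desired geometric ramified valuation of $L/K$, i.e.\ \cref{qp2}. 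Without this purely-inseparable detour your contrapositive strategy cannot be completed in characteristic $p$ beyond the low-dimensional cases where resolution is known.
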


Assuming the existence of a \emph{regular, proper} model $X$ of $K$, \cref{qp} is a straight-forward consequence of the Zariski-Nagata theorem on the purity of the branch locus. The existence of a regular, proper model of $K$ is known if
$\ch(k)=0$ \cite{Hi64}; if $\ch(k)=p>0$ and $\mathrm{tr.deg}_k K \le 2$ \cite{Li78}, and if
$\ch(k)=p>0$, $[k:k^p]< \infty$ and $\mathrm{tr.deg}_k K = 3$ \cite{CP08,CP09}.
In this paper we avoid assumptions on  resolution of singularities by using M.~Temkin's inseparable local uniformization theorem \cite{Te13} instead.

\medskip\noindent
As an application of \cref{qp} we show the following  \cref{tamethm}; see \cite{KS10} for the notion of curve-tameness and  \cite{Hue18} for that of  tameness for \'{e}tale morphisms of adic spaces. We recall the relevant definitions in Sections \ref{tamesec} and~\ref{spa-sec}.

\begin{itheorem}\label{tamethm}
Let $k$ be a field of positive characteristic, $X$ and $Y$ schemes, separated and of finite type over $k$ and $f: Y \to X$ a finite, \'{e}tale $k$-morphism.  Let
\[
\Spa(f):\ \Spa(Y,k) \lang \Spa(X,k)
\]
be the associated \'{e}tale morphism of adic spaces.

Then $f$ is curve-tame if and only if\/ $\Spa(f)$ is  tame.
\end{itheorem}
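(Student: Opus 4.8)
The plan is to translate both sides into the ramification behaviour of $L:=k(Y)$ over $K:=k(X)$ at $k$-valuations, to settle one implication by hand, and to deduce the other from \cref{qp}. Both notions are unchanged by decomposing $X$ into connected components and by replacing $X$ with its normalisation, so I would assume $X$ and $Y$ connected and $X$ normal; then $Y$ is normal and $f$ is the normalisation $X_L\to X$ of $X$ in a finite separable extension $L/K$. Unwinding the definitions, $\Spa(f)$ is tame iff for every integral closed subscheme $Z\subseteq X$ the covering $f|_Z\colon Y_Z\to Z$ is tamely ramified at every $k$-valuation of $k(Z)$. Curve-tameness of $f$, by \cite{KS10}, is the analogous condition but only for one-dimensional $Z$ and only at the rank-one $k$-valuations of $k(Z)$ whose residue field is finite over $k$ — the ``curve valuations'', which are the points of the smooth compactification of $Z^{\mathrm{norm}}$; concretely, $f$ is curve-tame iff the finite étale covering $Y_{Z^{\mathrm{norm}}}\to Z^{\mathrm{norm}}$ extends to a tamely ramified covering of that compactification for every one-dimensional integral closed $Z\subseteq X$ (one curve per $Z$ suffices, by base change of tameness).

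One implication is then formal. If $\Spa(f)$ is tame then, for integral closed $Z\subseteq X$, $\Spa(f|_Z)$ is the base change of $\Spa(f)$ along $\Spa(Z,k)\to\Spa(X,k)$, and tameness of étale morphisms of adic spaces is stable under base change (\cite{Hue18}); hence $\Spa(f|_Z)$ is tame, so $f|_Z$ is tamely ramified at every $k$-valuation of $k(Z)$, in particular at the curve valuations, and $f$ is curve-tame.

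For the converse I argue by contraposition; here \cref{qp} enters. Suppose $\Spa(f)$ is not tame, say $f|_Z$ is wildly ramified at a $k$-valuation $v$ of $k(Z)$ for some integral closed $Z\subseteq X$. Since curve-tameness of $f$ implies that of $f|_Z$ (curves in $Z$ are curves in $X$), I may replace $X$ by $Z$ and assume that $L/K$ is wildly ramified at a $k$-valuation $v$ of $K$, with extension $w$ to $L$. Now I invoke \cref{qp} — more precisely the sharper fact, proved by the same inseparable-local-uniformization method, that wild ramification at $w$ forces a model $X''$ of $K$ carrying a prime divisor $\bar D\subset X''$ along which $X''_L\to X''$ is \emph{wildly} ramified; the bare statement of \cref{qp} only yields a ramified divisor, and one must locate it near $v$, where the wildness lives, using that a covering which is tame in codimension one on a regular model is tame at every valuation. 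Because $f$ is étale over the original model $X$, the divisorial valuation of $\bar D$ is not centred on $X$, so the birational map $X''\to X$ is undefined at the generic point of $\bar D$: the divisor $\bar D$ lies ``at infinity'' relative to $X$. Being a prime divisor of the normal $X''$, $\bar D$ is regular at its generic point; shrinking $X''$ I may take it regular along a dense open of $\bar D$, with $\bar D$ regular there, and may assume that at a general closed point $P$ of $\bar D$ the divisor $\bar D$ is cut out by part of a regular system of parameters. I then choose a general curve $C\subset X''$ through $P$ transversal to $\bar D$ at $P$ and delete from $C$ the finitely many points lying over $\bar D$; the result $C^{\circ}$ is an affine curve over the residue field $\kappa(P)$ (finite over $k$) which — since $\bar D$ is at infinity — comes equipped with a morphism to $X$, and the point $c$ above $P$ is a boundary point of $C^{\circ}$. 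Over the common open of $X$ and $X''$ the pullback $Y\times_X C^{\circ}$ agrees with $X''_L\times_{X''}C^{\circ}$, which is wildly ramified at $c$ because $X''_L\to X''$ is wildly ramified along $\bar D$ and a general transversal curve through a general point of $\bar D$ picks up this ramification. Hence $f$ is not curve-tame, contradicting the hypothesis.

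The conceptual core is the reformulation together with the input from \cref{qp}; the two technical pressure points are, first, upgrading ``ramified divisor'' to ``wildly ramified divisor'' — this needs the proof of \cref{qp}, not merely its statement, together with the reduction ``tame in codimension one on a regular model $\Rightarrow$ tame everywhere'', which itself rests on Temkin's theorem — and, second, the claim that wild ramification along $\bar D$ survives restriction to a general transversal curve. The latter is the main obstacle: in characteristic $p$ genericity is essential — a badly placed transversal curve can be totally split over the intersection point, as $z^p-z=y/x$ restricted to $\{y=0\}$ already shows — and the substance is that the different, equivalently the Swan conductor divisor, of the divisorial covering restricts without cancellation to a general transversal section. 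The remaining steps — reduction to the normal case, passage to the subvariety $Z$, and the bookkeeping between $X$ and $X''$ — are routine but somewhat technical.
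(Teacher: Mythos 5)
Your overall architecture --- translate both sides into ramification of $k$-valuations on residue fields of points of $X$, settle the easy direction by base change, and reduce the hard direction to producing a \emph{wildly} ramified divisor on some model via \cref{qp} --- is the same as the paper's, and your easy direction and the reduction to the case where $x$ is the generic point of an integral (then, after shrinking, regular) $X$ are fine. The genuine gap is precisely the first of the two ``pressure points'' you flag yourself: upgrading the ramified divisor supplied by \cref{qp} to a wildly ramified one. You propose to do this by reworking the proof of \cref{qp} together with an auxiliary principle (``tame in codimension one on a regular model implies tame at every valuation'') that you neither prove nor can cite from the paper; as written, the crucial step is a placeholder rather than an argument. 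The paper sidesteps the problem with a short group-theoretic reduction (see the proof of \cref{tamethm2}): after passing to the Galois closure (\cref{galclos}), if $w$ is wildly ramified one chooses a cyclic subgroup $G$ of order $p$ inside the ramification group $R_w$ and replaces the covering by $Y\to Z:=Y/G$. This covering has degree $p$, so every nontrivial inertia group equals $G$, a $p$-group; hence \emph{any} ramified divisor produced by \cref{qp} --- in the sharpened form \cref{qp-sharp}, which also places the divisor at infinity relative to the \'{e}tale locus, a point you handle only loosely via ``the divisor lies at infinity'' --- is automatically wildly ramified. Non-curve-tameness then propagates back from $Y\to Z$ to $Y\to X$ by \cref{compo}.

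Your second pressure point --- that wild ramification along a divisor survives restriction to a suitably generic transversal curve, despite Artin--Schreier phenomena such as $z^p-z=y/x$ --- is correctly identified as the other nontrivial input, but you leave it as a sketch about Swan conductors and general position. The paper does not reprove it either: it invokes the Key Lemma~2.4 of \cite{KS10}, which, given a normal variety $\bar Z$ and a covering ramified along a prime divisor $D\subset\bar Z$, produces a regular curve $C$ and a morphism $C\to Z=\bar Z\smallsetminus D$ whose base change is (wildly) ramified along $P(C)\smallsetminus C$. If you cite that lemma, this half of your argument closes; without it, and without a genuine substitute for the ``upgrade to wild'' step, the proof is incomplete.
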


\section{Passage to the algebraic closure}
Let $K$ be a field (imperfect, otherwise the following discussion is void), $\bar K$ an algebraic closure of $K$, $K^s$ the separable closure of $K$ in $\bar K$ and $K_\infty= K^{1/p^\infty}$ the perfect closure of $K$ in $\bar K$. Then the natural map $\Aut(\bar K/K) \to \Aut(K^s/K)$, $\sigma\mapsto \sigma|_{K^s}$ induces an isomorphism
\[
\varphi: \Gal(\bar K/K_\infty) \liso \Gal(K^s/K).
\]
\begin{lemma} \label{ramfilt}
Let $v$ be a valuation on $\bar K$ and let $v^s$ be its restriction to $K^s$. Then $\varphi$ induces isomorphisms
\[
D_v \kiso D_{v^s}, \ I_v \kiso  I_{v^s}, \ R_v \kiso R_{v^s},\leqno (*)
\]
where the letters $D$, $I$ and $R$ denote the decomposition, inertia and ramification groups of the respective valuations.
\end{lemma}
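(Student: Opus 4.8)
The plan is to deduce all three isomorphisms from one elementary fact about a purely inseparable algebraic extension $F'/F$: every valuation of $F$ extends uniquely to $F'$ (explicitly $w(x)=p^{-n}w(x^{p^n})$ for any $n$ with $x^{p^n}\in F$), so a valuation of $F'$ is determined by its restriction to $F$; moreover the residue field of $F'$ is purely inseparable over that of $F$, and the value group quotient is $p$-primary. I would apply this to the extensions $\bar K/K^s$ and $k(v)/k(v^s)$, after which the statement becomes formal.

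For the decomposition groups: let $\sigma\in\Gal(\bar K/K_\infty)$ and set $\tau=\varphi(\sigma)=\sigma|_{K^s}$. Since $K^s$ is stable under $\Aut(\bar K/K)$, one has $(v\circ\sigma)|_{K^s}=v^s\circ\tau$, while $v|_{K^s}=v^s$. As $v\circ\sigma$ and $v$ are both valuations of $\bar K$, the uniqueness of the extension gives $v\circ\sigma=v$ if and only if $v^s\circ\tau=v^s$. Because $\varphi$ is bijective this yields $\varphi(D_v)=D_{v^s}$, and injectivity of $\varphi$ makes $\varphi|_{D_v}\colon D_v\kiso D_{v^s}$ an isomorphism.

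For the inertia groups: $\sigma$ preserves $\O_v$ and $K^s$, hence $\O_{v^s}=\O_v\cap K^s$, so the action of $\sigma$ on $k(v)$ restricts on the subfield $k(v^s)\hookrightarrow k(v)$ to the action of $\tau$ on the residue field of $v^s$. Since $k(v)/k(v^s)$ is purely inseparable, an automorphism $\rho$ of $k(v)$ fixing $k(v^s)$ pointwise is trivial: for $\bar y\in k(v)$ some $\bar y^{p^n}\in k(v^s)$ is fixed, so $\rho(\bar y)^{p^n}=\bar y^{p^n}$, whence $\rho(\bar y)=\bar y$ in the field $k(v)$. Thus $\sigma$ acts trivially on $k(v)$ exactly when $\tau$ acts trivially on $k(v^s)$, so $\varphi$ restricts to an isomorphism $I_v\kiso I_{v^s}$.

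For the ramification groups the argument is the same. If $R_w$ denotes the pro-$p$ Sylow subgroup of $I_w$, the claim is immediate from $I_v\kiso I_{v^s}$. If instead $R_w=\ker\bigl(I_w\to\Hom(\Gamma_w,k(w)^\times)\bigr)$, where $\sigma\mapsto(\gamma\mapsto\overline{\sigma(x)/x})$ with $w(x)=\gamma$, then for $\sigma\in I_v$ the character of $\tau$ on $\Gamma_{v^s}$ is the restriction of that of $\sigma$ along $k(v^s)^\times\hookrightarrow k(v)^\times$ (choose $x\in K^{s\times}$ of the given value, and note $\sigma(x)=\tau(x)$); and this character is trivial iff the character of $\sigma$ is trivial on all of $\Gamma_v$, since for $\gamma=v(y)$ one may take $n$ with $y^{p^n}\in K^{s\times}$, giving $\overline{\sigma(y)/y}^{\,p^n}=\overline{\sigma(y^{p^n})/y^{p^n}}$, which forces $\overline{\sigma(y)/y}=1$ once the $\tau$-character vanishes, as $k(v)^\times$ has no nontrivial $p$-power torsion. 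Either way $\varphi(R_v)=R_{v^s}$. I do not expect a genuine obstacle here: the only point requiring care is the compatibility of the residue-field and value-group data under restriction to $K^s$, and every such compatibility is settled by the same $p$-power computation used above.
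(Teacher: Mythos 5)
Your proof is correct and follows essentially the same route as the paper: uniqueness of the extension of valuations along the purely inseparable extension $\bar K/K^s$ for the decomposition groups, the purely inseparable residue field extension for the inertia groups (which the paper packages as a commutative diagram of exact sequences), and the $p$-Sylow description of the ramification groups. Your additional verification for the character-theoretic definition of $R$ is a harmless extra that the paper's one-line Sylow argument does not need.
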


\begin{proof}
Since $v$ is the only extension of $v^s$ to $\bar K$ \cite[Corollary 3.2.10]{EP05}, we have for $\sigma \in \Aut(\bar K/K)$ that
\[
\sigma v= v \ \Leftrightarrow \ \sigma|_{K^s}(v^s)=v^s,
\]
hence $\varphi$ induces an isomorphism $D_v \kiso D_{v^s}$.

Since $K_\infty/K$ is purely inseparable,  the same is true for the residue field extension
\[
\kappa(v|_{K_\infty})/\kappa(v|_K).
\]
Hence we obtain the commutative diagram
\[
\begin{tikzcd}
1\rar&I_v \rar&D_v\dar{\wr}\rar &G_{\kappa(v|_{K_\infty})}\dar{\wr}\rar&1\\
1\rar&I_{v^s} \rar&D_{v^s}\rar &G_{\kappa(v|_{K})}\rar&1
\end{tikzcd}
\]
inducing the claimed isomorphism $I_v \kiso I_{v^s}$. Finally, the ramification groups are the $p$-Sylow subgroups of the inertia groups, showing that also $R_v \kiso R_{v^s}$.
\end{proof}
The ramification indices are in general not preserved under inseparable base change:

\begin{example}
Let $K=\F_p(X,Y)$ and $L=K[T]/(T^p-XT-Y)$. The valuation of $K$ associated with $X$ does not split in $L/K$, the ramification index is equal to $1$, and the residue field extension is the inseparable extension $\F_p(Y^{1/p})/\F_p(Y)$. Now consider $K'=K(Y^{1/p})=\F_p(X,Y^{1/p})$, $L'=K'L$. Over $K'$ the polynomial $T^p-XT-Y$ can be written as $(T-Y^{1/p})^p-X(T-Y^{1/p}) + XY^{1/p}$. Up to a substitution, it is an Eisenstein polynomial. Hence the valuation of $K'$ associated with $X$ does not split in $L'/K'$, the ramification index is equal to $p$, and the residue extension is trivial.
\end{example}

\section{\'{E}tale versus unramified}

The following lemma seems to be well-known  but we could not find a reference.

\begin{lemma} \label{etale} Let $L/K$ be a finite field extension, $w$ a non-archimedean valuation of $L$ and $v=w|_K$. Then $w/v$ is unramified (in the valuation-theoretic sense) if and only if $\O_v\to O_w$ is \'{e}tale in the ring-theoretic sense.
\end{lemma}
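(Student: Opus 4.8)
The plan is to play the two sides off against the structure of the fibre of $\Spec\O_w\to\Spec\O_v$, after getting flatness for free. Write $A=\O_v\subseteq B=\O_w$. Since $B$ is a domain containing $A$, it is torsion-free, hence flat, as an $A$-module; thus $A\to B$ is flat unconditionally, and on the ring side ``étale'' means ``flat, unramified and of finite presentation''. I will show this holds precisely when $e(w/v)=1$, $\kappa(w)/\kappa(v)$ is finite separable, and the extension is defectless, which together is exactly what ``$w/v$ unramified'' means.

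For ``étale $\Rightarrow$ unramified'': $B$ is integral over $A$ (as $L/K$ is algebraic) and, being étale, of finite type over $A$, hence a finite $A$-module; by flatness it is finite locally free, and, $\Spec A$ being connected, of some constant rank $n$. Inverting $A\setminus\{0\}$ gives $B\otimes_AK=L$, so $n=[L:K]$; on the other hand $B\otimes_A\kappa(v)=B/\m_vB$ is finite étale over $\kappa(v)$ of rank $n$, and, $B$ being local, it is a single finite separable field extension of $\kappa(v)$, necessarily $\kappa(w)$. Hence $[\kappa(w):\kappa(v)]=[L:K]$, and the fundamental inequality $\sum_{w'\mid v}e(w'/v)f(w'/v)\le[L:K]$ forces $w$ to be the unique extension of $v$, $e(w/v)=1$ and the extension to be defectless; with the separability already in hand this is unramifiedness.

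For ``unramified $\Rightarrow$ étale'' I would first reduce to $(K,v)$ Henselian: base change along the faithfully flat $\O_v\to\O_v^h$ preserves both properties (étaleness being fppf-local on the base, unramifiedness since $e$, $f$ and the defect are invariant, and the new factors coming from other valuations being generic points, i.e.\ open immersions), so we may assume $w$ is the unique extension of $v$ to $L$ and $\O_w$ is the local integral closure of $\O_v$ in $L$. Now write $\kappa(w)=\kappa(v)(\bar\theta)$ with $\bar\theta$ a simple root of its separable irreducible minimal polynomial $\bar g\in\kappa(v)[T]$, $\deg\bar g=f:=[\kappa(w):\kappa(v)]$; lift $\bar g$ to a monic $g\in\O_v[T]$ and put $R=\O_v[T]/(g)$. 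Then $R$ is finite free (in particular finitely presented) over $\O_v$; it is local with residue field $\kappa(v)[T]/(\bar g)\cong\kappa(w)$; and $\Omega_{R/\O_v}$ is generated over $R$ by $dT$ with the single relation $g'(T)\,dT=0$, which vanishes since $g'(\bar\theta)\neq0$ makes $g'(T)$ a unit of the local ring $R$. Hence $R$ is étale over $\O_v$. Moreover $g$, having irreducible reduction, is irreducible, so $R$ is a normal domain with $\mathrm{Frac}(R)=K[T]/(g)$ of degree $f$ over $K$. By Hensel's lemma in the Henselian valued field $(L,w)$, $g$ has a root $\vartheta\in\O_w$ with $\vartheta\equiv\bar\theta\pmod{\m_w}$, giving an injective $\O_v$-algebra map $R\hookrightarrow\O_w$ and an embedding $K[T]/(g)\hookrightarrow L$. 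Since $w/v$ is unramified, $[L:K]=e(w/v)f(w/v)d(w/v)=f$, so this embedding is onto; thus $\mathrm{Frac}(R)=L$, and $R$, being a normal domain integral over $\O_v$ with fraction field $L$, equals the integral closure of $\O_v$ in $L$, namely $\O_w$. Hence $\O_v\to\O_w$ is étale.

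The main obstacle is the possible failure of $\O_w$ to be module-finite, even of finite type, over $\O_v$: this is precisely the ramification defect. It forces ``unramified'' to be read so as to include defectlessness (which is what makes the crucial degree identity $[L:K]=f$ available) and is also why one passes to the Henselian situation, where Hensel's lemma produces the homomorphism $R\to\O_w$ and the integral closure of $\O_v$ in $L$ is local, hence a valuation ring. The implication ``unramified $\Rightarrow$ étale'' is the substantive one; ``étale $\Rightarrow$ unramified'' is comparatively formal once flatness and integrality are secured.
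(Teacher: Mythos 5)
Your argument for ``\'{e}tale $\Rightarrow$ unramified'' breaks at the assertion that $B=\O_w$ is integral, hence module-finite, over $A=\O_v$. Algebraicity of $L$ over $K$ does not give integrality of $\O_w$ over $\O_v$: when $v$ has several extensions to $L$, $\O_w$ is the localization of the integral closure $\O_{v,L}$ at one of its maximal ideals, and localization destroys integrality. Concretely, take $K=\Q$, $L=\Q(i)$, $v$ the $5$-adic valuation and $w$ one of its two extensions; then $\O_w=\Z[i]_{(2+i)}=\Z_{(5)}[i][\tfrac{1}{2-i}]$ is \'{e}tale over $\Z_{(5)}$ but not finite over it, the element $(2+i)/5\in\O_w$ is not integral over $\Z_{(5)}$, and your intermediate conclusions ($\O_w$ finite locally free of rank $[L:K]$, $[\kappa(w):\kappa(v)]=[L:K]$, $w$ the unique extension of $v$) are all false, while $B/\m_vB=\F_5$ has degree $1$, not $2$. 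The implication itself is true, but proving it requires working locally at $\m_w$ --- e.g.\ base changing to the henselization $\O_v^h$ and using the structure theory of \'{e}tale algebras over henselian local rings to split off a finite \'{e}tale local factor through which $\O_{w^h}$ factors; only then do $e=1$, defectlessness and separability of the residue extension come out. This direction is not ``comparatively formal''.

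Your ``unramified $\Rightarrow$ \'{e}tale'' direction is the standard inertia-field construction and is correct over a henselian base, but the reduction to that case is exactly where the difficulty lives and is not actually carried out. Faithfully flat descent lets you conclude that $\O_v\to\O_w$ is \'{e}tale \emph{provided you know that the base change} $\O_w\otimes_{\O_v}\O_v^h$ is \'{e}tale over $\O_v^h$; what your henselian argument produces instead is that the valuation ring $\O_{w^h}$ of the unique extension of $v^h$ to $LK^h$ is \'{e}tale over $\O_{v^h}$. Identifying $\O_w\otimes_{\O_v}\O_v^h$ with $\O_{w^h}$ (up to factors you control) amounts to commuting integral closure and its localizations with this base change, which is not automatic in the non-noetherian, possibly defective setting; the parenthetical about ``new factors being generic points'' does not address this. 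A cleaner repair is to run the construction of the standard \'{e}tale algebra $R$ directly over $\O_v$ inside a Galois closure of $L/K$, via the inertia field. That is in effect the paper's route, which is quite different from yours: it passes to the Galois situation and quotes \cite[Ch.\ X, Theorem 1]{Ra70} for the statement that triviality of the inertia group characterizes \'{e}taleness of $\O_v\to\O_w$, in parallel with the classical fact that it characterizes valuation-theoretic unramifiedness.
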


\begin{proof}
Let $\O_{v,L}$ be the integral closure of $\O_v$ in $L$. Then, for every maximal ideal $\m\subset \O_{v,L}$, the localization $(\O_{v,L})_\m$ is a valuation ring of a valuation of $L$ which extends $v$ and, by \cite[Theorem 3.2.13]{EP05}, the assignment
\[
\m \longmapsto (\O_{v,L})_\m
\]
gives a 1:1-correspondence between the maximal ideals of $\O_{v,L}$ and those valuations. In particular, $\O_{v,L}$ is semi-local. As is well known, the inertia group of the action of $G=\Gal(L/K)$ detects ramification in the valuation-theoretic sense. The same is true for \'{e}tale by
\cite[Ch.\ X, Theorem 1]{Ra70}.
\end{proof}

\begin{example}[A.~Holschbach] \label{holshexample} Consider the field $K=\C((T))$ and let $L=K(T^{\frac{1}{2}})$. Consider
\[
K_\infty= \bigcup_{r=1}^\infty K(T^{1/3^r}), \ L_\infty=LK_\infty.
\]
Then $L_\infty/K_\infty$ is ramified in the valuation-theoretic sense. The associated rings of integers are
\[
A=\C[[T]][T^{1/3^r}, r\geq 1], \quad B= \C[[T]][T^{1/2\cdot 3^r}, r\ge 1].
\]
Hence the ring extension $B/A$ satisfies $\m_AB=\m_B$. But it is not of finite type, hence not \'{e}tale in the ring-theoretic sense.
\end{example}


\begin{lemma}\label{ramover}
Let $k$ be a field, $X$ a normal, connected and separated scheme of finite type over $k$, $K=k(X)$, $L/K$ a finite, separable field extension and $Y$ the normalization of $X$ in $L$. Let $w$ be a $k$-valuation on $L$ having center $y \in Y$ and let $v$ be the restriction of $w$ to $K$.  Assume that $Y\to X$ is \'{e}tale at $y$. Then $w/v$ is unramified.
\end{lemma}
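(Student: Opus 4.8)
The plan is to reduce to \cref{etale}: letting $x\in X$ be the image of $y$ (so that $v$ has center $x$, since $\O_{X,x}\to\O_{Y,y}$ is local and $\O_w$ dominates $\O_{Y,y}$), it suffices to show that $\O_v\to\O_w$ is étale. Write $A:=\O_{X,x}$ and $B:=\O_{Y,y}$; the hypothesis says $A\to B$ is (essentially) étale, in particular flat — hence faithfully flat, being local — and unramified with $\kappa(y)/\kappa(x)$ finite separable. I would first produce a concrete candidate for $\O_w$. Set $C:=B\otimes_A\O_v$. Then $\O_v\to C$ is (essentially) étale by base change; the map $C\to L$, $b\otimes s\mapsto bs$, is injective because $B$ is flat over $A$ and $B\otimes_AK=L$ (multiplying an element of $L$ by a suitable nonzero element of $A$ makes it integral over $A$, hence puts it in $B$); and $C$ is in fact a domain with fraction field $L$: it is reduced (étale over the reduced ring $\O_v$), and, by flatness, each of its minimal primes lies over the minimal prime $(0)$ of $\O_v$, whose fibre in $\Spec C$ is $\Spec(B\otimes_AK)=\Spec L$, a single point. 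Since $\O_w$ dominates both $\O_v$ and $B$, we have $C\subseteq\O_w$; put $\mathfrak p:=\m_w\cap C$, a prime over $\m_v$, so $C_{\mathfrak p}\subseteq\O_w$ and $\O_w$ dominates $C_{\mathfrak p}$.

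The crux is the identification $\O_w=C_{\mathfrak p}$. As $\O_v\to C_{\mathfrak p}$ is essentially étale, it is faithfully flat with $\m_vC_{\mathfrak p}=\mathfrak pC_{\mathfrak p}$ and $\kappa(\mathfrak p)/\kappa(v)$ finite separable. I claim $C_{\mathfrak p}$ is a valuation ring of $L$; granting this, $C_{\mathfrak p}$ and $\O_w$ are valuation rings of $L$ with the former contained in and dominated by the latter, so they coincide. To prove the claim I would pass to the strict henselization $\O_v^{\mathrm{sh}}$, which is again a valuation ring, with the same value group as $\O_v$ and separably closed residue field (cf.\ \cite{EP05}); since $\kappa(\mathfrak p)/\kappa(v)$ is separable, $C_{\mathfrak p}\otimes_{\O_v}\O_v^{\mathrm{sh}}$ is, at each of its maximal ideals, isomorphic to $\O_v^{\mathrm{sh}}$, and choosing the maximal ideal lying under the given valuation $w$ exhibits $C_{\mathfrak p}$ as a subring of the valuation ring $\O_v^{\mathrm{sh}}$ on which the restriction of that valuation is $w$; in particular $w(L^\times)$ is contained in the value group of $\O_v^{\mathrm{sh}}$, which is $v(K^\times)$. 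Hence $C_{\mathfrak p}$ is a valuation ring, with value group $v(K^\times)$, so that $e(w/v)=1$.

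It then follows that $\O_w=C_{\mathfrak p}$ is essentially étale over $\O_v$ — indeed, by the local structure of étale morphisms it is of the form $(\O_v[t]/(f))_{\mathfrak n}$ with $f$ monic and $f'\notin\mathfrak n$, and one recovers $\m_v\O_w=\m_w$, $w(L^\times)=v(K^\times)$ and $\kappa(w)=\kappa(\mathfrak p)$ separable over $\kappa(v)$ — whence $w/v$ is unramified, either directly or via \cref{etale}. The main obstacle is precisely this identification $\O_w=C_{\mathfrak p}$: because $\O_v$ need not be Noetherian or of finite rank one cannot fall back on discrete valuations, and $C$ genuinely has to be localized, since it is local only when $\kappa(y)\otimes_{\kappa(x)}\kappa(v)$ happens to be a field; the key input is that an essentially étale local extension of a valuation ring is again a valuation ring with the same value group. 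The remaining verifications are routine bookkeeping with étale morphisms and with the notion of the center of a valuation.
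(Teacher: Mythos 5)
Your overall strategy is sound and ends up in the same place as the paper: both arguments identify $\O_w$ with the localization $C_{\mathfrak p}$ of $B\otimes_A\O_v$ at the trace of $\m_w$ and then conclude via \cref{etale}. The difference lies in how that identification is justified, and this is where your argument has a gap. Having produced the local injection $C_{\mathfrak p}\hookrightarrow(C_{\mathfrak p}\otimes_{\O_v}\O_v^{\mathrm{sh}})_{\mathfrak m}\cong\O_v^{\mathrm{sh}}$, you write that this ``exhibits $C_{\mathfrak p}$ as a subring of the valuation ring $\O_v^{\mathrm{sh}}$\,\dots\ Hence $C_{\mathfrak p}$ is a valuation ring.'' That inference is a non sequitur: a local subring of a valuation ring, even one dominated by it, need not be a valuation ring (a two-dimensional regular local ring is dominated by plenty of valuation rings of its fraction field). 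What saves the argument is a property of the map you constructed but did not use: $C_{\mathfrak p}\to(C_{\mathfrak p}\otimes_{\O_v}\O_v^{\mathrm{sh}})_{\mathfrak m}$ is flat and local, hence faithfully flat, and for an injective faithfully flat extension of domains $D\subseteq D'$ one has $D=\mathrm{Frac}(D)\cap D'$ (if $a/b\in D'$ with $a,b\in D$, then $a\in bD'\cap D=bD$). This yields $C_{\mathfrak p}=L\cap\O_v^{\mathrm{sh}}$, which genuinely is a valuation ring of $L$ with value group contained in $v(K^\times)$, after which your domination argument correctly gives $C_{\mathfrak p}=\O_w$. Alternatively, you could simply cite the known fact that an \'{e}tale extension of a valuation ring is, locally at any prime over the maximal ideal, again a valuation ring with the same value group, rather than reproving it.

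The paper reaches the identification more cheaply. Since \'{e}tale schemes over normal schemes are normal, the local ring of $Y\times_X\Spec(\O_v)$ at the relevant point (your $C_{\mathfrak p}$) is normal; a normal local subring of $L$ containing $\O_v$ with fraction field $L$ contains the integral closure $\O_{v,L}$ of $\O_v$ in $L$, and since by \cite[Theorem 3.2.13]{EP05} (as recalled in the proof of \cref{etale}) $\O_w$ is the localization of $\O_{v,L}$ at $\m_w\cap\O_{v,L}$, one gets $\O_w\subseteq C_{\mathfrak p}\subseteq\O_w$ directly. Your route, once repaired, proves a bit more (that $e(w/v)=1$ and the value groups agree, without invoking the integral closure), but at the cost of the henselization detour; the normality argument is the shorter path if all you want is the statement of \cref{ramover}.
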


\begin{proof} Let $s$ be the special point  of $\Spec(\O_w)$ and $t$ its image under $\Spec(\O_w) \to Y\times_X \Spec(\O_v)$. Since $t$ maps to $y$ in $Y$ and $Y\to X$ is \'{e}tale at $y$, the base change $Y\times_X \Spec(\O_v) \to \Spec(\O_v)$ is \'{e}tale at $t$. As \'{e}tale schemes over normal schemes are normal \cite[Ch.\ VII, Prop.\,2]{Ra70}, the local ring of $Y\times_X \Spec(\O_v)$ at $t$ is normal and hence isomorphic to $\O_w$ (cf.\ the proof of \cref{etale}). Since $Y\times_X \Spec(\O_v) \to \Spec(\O_v)$ is \'{e}tale at $t$, $\O_w/\O_v$ is \'{e}tale.
\end{proof}

\section{Quasi-Purity}

Let $K/k$ be a finitely generated field extension of transcendence degree $d$ and let $v$ be a discrete rank one $k$-valuation on $K$. By Abhyankar's inequality (cf., e.g., \cite[Prop.\,3.2]{KS10}), we have for the residue field $Kv$ of $v$ that
\[
\mathrm{deg.tr}_k  (Kv) \le d-1. \eqno (*)
\]
We call $v$ \emph{geometric}, if equality holds in $(*)$. By \cite[\S 8,\, Thm.\,3.26\,(b)]{Liu} $v$ is geometric if and only if there exists a model $X$ of $K$ and a Weil prime divisor $D\subset X$ with $\O_v=\O_{X,D}$. Hence \cref{qp} is equivalent to

\begin{theorem}\label{qp2}
Let $k$ be a field, $K/k$ a finitely generated field extension and $L/K$ a finite, separable  extension. Assume there exists a $k$-valuation $w$ on $L$ with restriction $v$ to $K$ such that $w/v$ is ramified. Then there exists a geometric discrete rank one $k$-valuation $W$ on $L$ with restriction $V$ to $K$ such that $W/V$ is ramified.
\end{theorem}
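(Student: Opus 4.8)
My plan is to avoid the use of a regular model by combining M.~Temkin's inseparable local uniformization theorem with the Zariski--Nagata purity theorem, and then to descend the divisorial valuation one obtains back along the purely inseparable extension. The whole argument hinges on one observation, which I would prove first.

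\smallskip\noindent\textbf{Auxiliary claim.}\ \emph{Let $K_1/K$ be a finite purely inseparable field extension, $L_1=LK_1$, and let $W_1$ be a valuation on $L_1$ with restrictions $V_1=W_1|_{K_1}$, $W=W_1|_L$, $V=W_1|_K$. Then $W/V$ is ramified if and only if $W_1/V_1$ is ramified. Moreover, if $W_1$ is a geometric discrete rank one valuation, then so is $W$, and likewise for $V_1$ and~$V$.}

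\smallskip\noindent
For the first assertion I would pass to the Galois closure $M/K$ of $L/K$ and to $M_1:=MK_1$, which is the Galois closure of $L_1/K_1$. Restriction to $M$ gives an isomorphism $\Gal(M_1/K_1)\kiso\Gal(M/K)$ carrying $\Gal(M_1/L_1)$ onto $\Gal(M/L)$; fixing a prolongation of $V$ to $M$ with restriction $W$ to $L$ and its (unique) prolongation to $M_1$, this isomorphism also matches up decomposition and inertia groups, since the intervening residue field extensions are purely inseparable (this is precisely the mechanism of \cref{ramfilt}). As $W/V$ is unramified exactly when $L$ lies in the inertia field, the equivalence follows. (The ramification index is genuinely not preserved, cf.\ the Example following \cref{ramfilt}; what persists is the triviality of the inertia group.) For the second assertion, $W_1$ is the unique prolongation of $W$ to $L_1$, so $\Gamma_W$ has finite index in $\Gamma_{W_1}\cong\Z$ and is therefore $\cong\Z$, while the residue extension $L_1W_1/LW$ is finite by the fundamental inequality; hence the residue transcendence degree is unchanged and $W$ is again geometric by Abhyankar's criterion. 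The same reasoning applies to $V_1$ and~$V$.

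\smallskip\noindent
Now to the main argument. Assume $w/v$ is ramified. I would apply Temkin's theorem to the $k$-valuation $v$ on $K$ to obtain a finite purely inseparable extension $K_1/K$ and a model $X_1$ of $K_1$ on which the (unique) prolongation $v_1$ of $v$ is centered at a point $x_1$ with $A:=\O_{X_1,x_1}$ regular; shrinking $X_1$ to its normal locus I keep a normal model containing $x_1$. Put $L_1=LK_1$, which is finite and separable over $K_1$; let $Y_1$ be the normalization of $X_1$ in $L_1$, a normal model of $L_1$, and let $y_1\in Y_1$ be the center of the prolongation $w_1$ of $w$, which lies over $x_1$. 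By the auxiliary claim $w_1/v_1$ is ramified, so \cref{ramover} shows that $Y_1\to X_1$ is not \'{e}tale at $y_1$; equivalently, writing $B$ for the integral closure of $A$ in $L_1$ --- a normal, finite $A$-algebra with fraction field $L_1$ separable over $K_1=\mathrm{Frac}(A)$ --- the ring map $A\to B$ is not \'{e}tale. Since $A$ is regular and $B$ normal and finite over $A$, the Zariski--Nagata purity theorem applies: were $A\to B$ \'{e}tale at every height one prime of $B$ it would be \'{e}tale, so it fails to be \'{e}tale at some height one prime $\q\subset B$, whose contraction $\p:=\q\cap A$ then also has height one. Letting $\eta\in Y_1$ and $z\in X_1$ be the corresponding codimension one points (with $z$ the image of $\eta$), the discrete valuation rings $\O_{Y_1,\eta}=B_\q$ and $\O_{X_1,z}=A_\p$ are the valuation rings of geometric rank one $k$-valuations $W_1$ on $L_1$ and $V_1=W_1|_{K_1}$ on $K_1$, with $W_1/V_1$ ramified. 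Finally I would set $W:=W_1|_L$ and $V:=V_1|_K$: by the auxiliary claim these are geometric discrete rank one $k$-valuations with $V=W|_K$, and $W/V$ is ramified --- which is the assertion of \cref{qp2}.

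\smallskip\noindent
The step I expect to be the real obstacle is the auxiliary claim: one must show that triviality of the inertia group survives a finite purely inseparable base change, even though --- as the Example following \cref{ramfilt} shows --- the ramification index does not; this is exactly where \cref{ramfilt} is indispensable. A subordinate technical point is that Temkin's theorem provides a model regular only \emph{at the center} of $v_1$, not a globally regular model, so the purity argument has to be carried out on the local ring $A=\O_{X_1,x_1}$ (which is why one first shrinks $X_1$ to a normal neighbourhood of $x_1$ before invoking \cref{ramover}), and one must still verify that the height one primes produced by purity genuinely correspond to geometric valuations --- which they do, since $X_1$ and $Y_1$ are normal models on which $z$ and $\eta$ are Weil prime divisors.
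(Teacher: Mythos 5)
Your proposal is correct and follows essentially the same route as the paper: Temkin's inseparable local uniformization, \cref{ramfilt} to transport ramification through the purely inseparable base change, \cref{ramover} to deduce non-\'{e}taleness at the center, Zariski--Nagata purity to produce a ramified height-one prime, and \cref{ramfilt} again to descend. Your two additions --- carrying out purity on the local ring at the center rather than on a globally regular model, and explicitly checking that geometricity and discrete rank one survive restriction along the purely inseparable extension --- are sound refinements of details the paper leaves implicit.
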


\begin{proof}
We choose a proper model $X$ of $K$. By M.~Temkin's inseparable local uniformization theorem \cite[Cor.\,1.3.3]{Te13}, we find a regular, connected $k$-scheme $X'$ and a $k$-morphism $X'\to X$ such that $K'=k(X')/K=k(X)$ is a finite, purely inseparable extension and the unique $k$-valuation $v'$ of $K'$ extending $v$ has center in $X'$. Let $L'=LK'$ be the composite (in some algebraic closure of $K$) and $w'$ the unique $k$-valuation of $L'$ lying over $w$. Then $w'|_{K'}=v'$ and $w'/v'$ is ramified by \cref{ramfilt}. Hence, by \cref{ramover}, the scheme morphism $X'_L \to X'$ is not \'{e}tale. By Zariski-Nagata purity \cite[X, Th\'{e}or\`{e}me 3.4]{SGA2}, we find a ramified divisor, hence a geometric rank one $k$-valuation $W'$ of $L'$ with restriction $V'$ to $K'$ such that $W'/V'$ is ramified. Denoting the respective restrictions to $L$ and $K$ by $W$ and $V$, another application of \cref{ramfilt} shows that $W/V$ is ramified.
\end{proof}

We will use the following mild sharpening of \cref{qp} in the proof of \cref{tamethm2} below.

\begin{corollary}\label{qp-sharp}
Under the assumptions of \cref{qp} let $U$ be a model of $K$ such that $U_L\to U$ is \'{e}tale. Then we can choose the model $X$ of $K$ asserted in \cref{qp} in such a way that there is an open $k$-immersion $j: U \hookrightarrow X$ and the ramified prime divisor $D$ is contained in $ X_L \smallsetminus U_L$.
\end{corollary}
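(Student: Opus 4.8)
The plan is to start from the valuation $w$ on $L$ ramified over its restriction $v$ to $K$, and to feed the situation into the machinery that proves \cref{qp2}, keeping track of the auxiliary \'{e}tale model $U$ throughout. The key point is that Temkin's local uniformization produces a regular model only for $K'$, a purely inseparable extension of $K$, so one must be careful to arrange that the modification over $X$ is an \emph{isomorphism over $U$}, which is possible because $U_L \to U$ is already \'{e}tale and, in particular, $U$ contributes no ramification.

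First I would replace the proper model $X$ of $K$ chosen in the proof of \cref{qp2} by a proper model that \emph{contains $U$ as an open subscheme}; this is routine, since any model of $K$ dominates a common proper model and one can take the closure of $U$ in a suitable compactification (using Nagata compactification), then normalize. Then, applying \cite[Cor.\,1.3.3]{Te13}, I obtain a regular connected $X'$ with a purely inseparable $K'=k(X')$ over $K$ and a $k$-morphism $X'\to X$ such that the unique extension $v'$ of $v$ to $K'$ has a center on $X'$. Because $U$ is already regular (being \'{e}tale over the normal $X$ — wait, $U$ need not be regular; but $U_L\to U$ \'{e}tale only gives normality, not regularity), I would instead argue as follows: shrink nothing, but observe that the purely inseparable modification $X'\to X$ can be chosen to be an isomorphism over the regular locus, and more importantly over $U$ one does not need regularity at all, because over $U$ the map $U_L \to U$ is already \'{e}tale, hence \cref{ramover} already applies there without uniformization. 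Concretely: let $X'\to X$ be as above, set $U'=U\times_X X'$; the composite $L'=LK'$ and the valuations $w',v'$ are as in \cref{qp2}, and $w'/v'$ is ramified by \cref{ramfilt}. By \cref{ramover} the scheme morphism $X'_{L'}\to X'$ is not \'{e}tale. Since $U'_{L'}\to U'$ is the base change of $U_L\to U$ along a morphism, it is \'{e}tale, so the non-\'{e}tale locus of $X'_{L'}\to X'$ lies in $X'\smallsetminus U'$; equivalently, by Zariski--Nagata purity, there is a prime divisor $D'\subset X'_{L'}$ disjoint from $U'_{L'}$ which ramifies, giving a geometric rank one valuation $W'$ ramified over $V'=W'|_{K'}$.

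Then I would descend back along the purely inseparable $K'/K$: the restrictions $W=W'|_L$, $V=W'|_K$ are geometric rank one valuations (transcendence degree of the residue field is unchanged under the algebraic extension $K'/K$, cf.\ the discussion preceding \cref{qp2}) and $W/V$ is ramified by \cref{ramfilt}. It remains to produce the model $X$ of $K$ with the open immersion $j:U\hookrightarrow X$ and $D\subset X_L\smallsetminus U_L$: for this I take $X$ to be a model of $K$ dominated by $X'$ on which $W$ (equivalently $V$) is realized by a prime divisor $D$ — such an $X$ exists by \cite[\S 8,\,Thm.\,3.26\,(b)]{Liu} — chosen large enough to receive $U$ as an open subscheme (again possible since $W|_K=V$ has its center away from $U$, so gluing $U$ with a model realizing $D$ along their common open where both are defined, then normalizing, works), and $D$ is disjoint from $U_L$ because $D'$ was disjoint from $U'_{L'}$ and $U'\to U$ is surjective.

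The main obstacle I anticipate is the bookkeeping in the last step: ensuring simultaneously that $X$ receives $U$ as an \emph{open} $k$-immersion and that the divisor $D$ is realized on $X$ and avoids $U_L$. The cleanest way is probably to note that the center of $V$ on $U$ is empty (since $D'$ avoids $U'$, its image avoids $U$), so $V$ has a center on $X\smallsetminus U$ for any model $X$ containing $U$ on which $V$ is divisorial; then glue $U$ and such a model along $\Spec K$ via flatification/Nagata, and normalize. Checking that $D$ remains a prime divisor after this gluing-and-normalizing — i.e.\ that no collapsing occurs along $D$ — is the point requiring care, but it follows because the local ring $\O_W$ is unchanged by normalization and the gluing is an isomorphism near $D$.
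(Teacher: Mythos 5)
Your key observation is the same one the paper uses: because $U_L\to U$ is \'{e}tale, \cref{ramover} forces the center of the ramified valuation $V$ (and of $W$) to lie outside $U$ (resp.\ $U_L$) on any model containing $U$, so the ramified divisor can only appear in the boundary. However, your route is considerably more complicated than necessary, and its final step has a real gap. The paper does \emph{not} re-run the proof of \cref{qp2} while carrying $U$ through Temkin's uniformization; it simply invokes \cref{qp2} as a black box to obtain the geometric discrete rank one valuation $W/V$ with $W/V$ ramified, chooses a proper model $\bar U\supset U$, observes that the (nonempty, by properness) center of $V$ on $\bar U$ lies in $\bar U\smallsetminus U$ because $U_L\to U$ is \'{e}tale, and then realizes $V$ divisorially by successively blowing up $\bar U$ only in centers contained in $\bar U\smallsetminus U$ and normalizing, following \cite[\S 8, Exercise 3.14]{Liu}. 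This leaves $U$ untouched as an open subscheme and places the divisor of $V$, hence $D$, in the boundary. Your tracking of $U'=U\times_X X'$ through the uniformization is not wrong (one does need to check that the normalization of $U'$ in $L'$ agrees with $U_L\times_U U'$, which holds here since $L\otimes_K K'=L'$ and \'{e}tale base change of a normal scheme is normal), but it buys you nothing that the black-box application of \cref{qp2} plus \cref{ramover} does not already give.

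The genuine gap is in your construction of the final model $X$. ``Gluing $U$ with a model realizing $D$ along $\Spec K$'' (or along a common open) does not in general produce a \emph{separated} scheme, and separatedness is part of the definition of a model in this paper; two distinct models of $K$ glued along their common generic point or a common open typically yield a non-separated prevariety (the line with doubled origin is the basic example). The standard repairs are either to take the join (closure of the diagonal in the product) and then check that $U$ still embeds as an open and that $V$ remains divisorial, or — as the paper does — to start from a single proper compactification $\bar U$ of $U$ and modify it only over $\bar U\smallsetminus U$ by blowups, which manifestly preserves both the open immersion of $U$ and separatedness while making $V$ divisorial. Your closing remark that ``the gluing is an isomorphism near $D$'' does not address separatedness, which is a global condition. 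With the blowup-in-the-boundary argument substituted for the gluing, your proof closes and coincides in substance with the paper's.
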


\begin{proof}
We choose a proper model $\bar U$ of $K$ containing $U$ as an open subscheme. By \cref{qp2}, we find a geometric discrete rank one $k$-valuation $W$ of $L$ with restriction $V$ to $K$ such that $W/V$ is ramified. Since $\bar U$ is proper, $V$ has a nonempty center on $\bar U$ which is contained in $\bar U \smallsetminus U$ since $U_L\to U$ is \'{e}tale.  Following \cite[\S 8,\,Exercise 3.14]{Liu}, by successively blowing up $\bar U$ in centers contained in $\bar U \smallsetminus U$ and finally normalizing, we find a normal compactification $X$ of $U$ such that $V$ is the valuation associated to a point $x\in X\smallsetminus U$ of codimension one in $X$. This finishes the proof.
\end{proof}

\section{Curve-tameness}\label{tamesec}
Let $k$ be a field of positive characteristic. By \emph{variety} we mean a separated scheme of finite type over $k$,
a {\em curve}  $C$ is an integral variety with $\dim C=1$ and by \emph{\'{e}tale covering} we mean finite, \'{e}tale morphism. For a regular curve $C$ there exists a unique regular curve $P(C)$ which is proper and contains $C$ as a dense open subscheme.
Recall that an \'{e}tale covering $C'\to C$ of regular curves is called {\em tamely ramified along $P(C) \smallsetminus C$} if for every $x\in P(C) \smallsetminus C$ the associated valuation $v_x$ is tamely ramified in the finite, separable field extension $k(C')/k(C)$. This definition extends to the case of general regular varieties  of dimension one by requiring tameness on every connected component.

\medskip
Recall the following definitions from \cite{KS10}:

\begin{definition}
An \'{e}tale covering $Y\to X$ of varieties is {\em curve-tame} if for any morphism $C\to X$ with $C$ a regular curve, the base change $Y\times_X  C \to  C$ is tamely ramified along $P(C)\smallsetminus C$.

If, in addition, $X$ and $Y$ are normal and connected,  we say that $Y\to X$ is
{\em valuation-tame} if every $k$-valuation of $k(X)$ is tamely ramified in the field extension  $k(Y)/k(X)$.
This definition extends to coverings of general normal varieties by requiring valuation tameness on every connected component.
\end{definition}

By definition, the notions of curve- and valuation-tameness agree for coverings of regular curves. The statement of the next lemma follows directly from the definitions.

\begin{lemma}\label{compo}
Let $g: Z\to Y$ and $f:Y\to X$ be \'{e}tale coverings. If $g$ and $f$ are curve-tame, then the same holds for $f\circ g$. If $f\circ g$ is curve-tame, then $g$ is curve-tame and if, in addition, $g$ is surjective, then also $f$ is curve-tame.

The same holds for valuation-tame instead of curve-tame. \qed
\end{lemma}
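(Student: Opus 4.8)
The plan is to reduce the whole statement to two elementary ingredients and some bookkeeping. The first ingredient is the transitivity of tameness of valuations in a tower of finite separable extensions: for $K\subseteq L\subseteq M$ and valuations $v$ of $K$, $u$ of $L$, $w$ of $M$ with $u|_K=v$ and $w|_L=u$, the extension $w/v$ is tame if and only if both $u/v$ and $w/u$ are tame; this follows from multiplicativity of ramification indices together with the fact that for an intermediate field $L$ of a finite extension $M/K$, separability of $M/K$ is equivalent to separability of both $L/K$ and $M/L$. The second ingredient is that, $f$ being \'{e}tale and hence unramified, the diagonal $\Delta_f\colon Y\to Y\times_X Y$ is an open immersion. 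I will also use repeatedly that for a regular connected curve $C$ over $k$ and a finite \'{e}tale morphism a base change like $Y\times_X C$ is a finite disjoint union of regular curves $D_1,\dots,D_n$, each finite \'{e}tale over $C$, with $Z\times_X C=Z\times_Y(Y\times_X C)=\coprod_i Z\times_Y D_i$; and that, because these base changes are finite over $C$, any $k$-valuation of the function field of such a $D_i$ (or of a component of $Z\times_Y D_i$) restricting to the valuation $v_x$ of a boundary point $x\in P(C)\smallsetminus C$ has its centre on the smooth completion again lying over $x$, hence in the boundary.

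For ``$f$ and $g$ curve-tame $\Rightarrow f\circ g$ curve-tame'' I would take a regular curve $C\to X$, form the $D_i$ as above, and observe that curve-tameness of $f$ makes each $D_i\to C$ tamely ramified along $P(C)\smallsetminus C$, while curve-tameness of $g$ applied to each morphism $D_i\to Y$ makes $Z\times_Y D_i\to D_i$ tamely ramified along $P(D_i)\smallsetminus D_i$. For a component $E$ of $Z\times_Y D_i$ and a boundary point $x\in P(C)\smallsetminus C$, any extension $w$ of $v_x$ to $k(E)$ restricts to a valuation $u$ on $k(D_i)$ whose centre lies in $P(D_i)\smallsetminus D_i$; then $u/v_x$ and $w/u$ are tame, hence so is $w/v_x$ by the first ingredient. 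Thus $E\to C$, and therefore $Z\times_X C\to C$, is tamely ramified along $P(C)\smallsetminus C$, which is curve-tameness of $f\circ g$. The valuation-tame version is the same computation carried out directly in function fields, with no bookkeeping about centres: for a $k$-valuation $v$ of $k(X)$, an extension $w$ to $k(Z)$ and $u=w|_{k(Y)}$, tameness of $u/v$ (from $f$) and of $w/u$ (from $g$) gives tameness of $w/v$.

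For the reverse implications I would first note that the canonical $C$-morphism $Z\times_Y C\to Z\times_X C$ is the preimage of $\Delta_f(Y)$ under $Z\times_X C\to Y\times_X Y$, $(z,c)\mapsto(g(z),\ \text{image of }c\text{ in }Y)$, hence an open immersion by the second ingredient; being also finite (a morphism of finite $C$-schemes into a separated $C$-scheme is finite), it identifies $Z\times_Y C$ with a union of connected components of $Z\times_X C$. So if $f\circ g$ is curve-tame, then for any regular curve $C\to Y$, viewed over $X$, the tame cover $Z\times_X C\to C$ restricts to $Z\times_Y C\to C$, which is therefore tame; hence $g$ is curve-tame, and the analogous computation in function fields, using the other direction of the first ingredient for $k(X)\subseteq k(Y)\subseteq k(Z)$, gives valuation-tameness. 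If moreover $g$ is surjective, then for a regular curve $C\to X$ each $Z\times_Y D_i$ is nonempty; picking a component $E$, which is finite, surjective and \'{e}tale over the connected curve $D_i$ and is itself a connected component of the tame cover $Z\times_X C$, any extension $u$ of $v_x$ to $k(D_i)$ extends further to $w$ on $k(E)$ with $w/v_x$ tame, whence $u/v_x$ is tame by the first ingredient; thus each $D_i\to C$ is tame along $P(C)\smallsetminus C$ and $f$ is curve-tame. The valuation-tame analogue holds as well, and there surjectivity of $g$ is not needed, since valuations always extend along $k(Y)\subseteq k(Z)$.

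I do not expect a genuine obstacle: the content is exactly the transitivity of tameness in the first ingredient, the standard behaviour of base change for \'{e}tale morphisms of curves, and the remark that $f$ unramified makes $Z\times_Y C$ a union of connected components of $Z\times_X C$. The only point that needs a little care is that curve-tameness is tested only against morphisms out of regular curves, which is what forces the argument to pass through the intermediate curves $D_i=Y\times_X C$ rather than comparing $X$ and $Z$ directly; this is also where surjectivity of $g$ is used in the proof that $f$ is curve-tame, as it is precisely what makes the $D_i$ visible inside the tame cover $Z\times_X C$.
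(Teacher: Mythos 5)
Your proposal is correct, and it supplies exactly the routine verification that the paper omits: the lemma is stated with no written proof (the text says it ``follows directly from the definitions''), and your two ingredients --- transitivity of tame ramification in a tower $K\subseteq L\subseteq M$, and the fact that for \'{e}tale $f$ the open immersion $\Delta_f$ makes $Z\times_Y C$ an open and closed subscheme, hence a union of connected components, of $Z\times_X C$ --- together with the identification of $P(D_i)\smallsetminus D_i$ with the preimage of $P(C)\smallsetminus C$ are precisely what one needs. One small over-claim: your final remark that surjectivity of $g$ is not needed for the valuation-tame implication ``$f\circ g$ tame $\Rightarrow$ $f$ tame'' fails when $Y$ is disconnected and $Z$ misses a whole component of $Y$ (e.g.\ $Z=\emptyset$), since then no valuation of the function field of that component extends to $k(Z)$; this does not affect the lemma as stated, which assumes surjectivity in both versions.
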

\begin{lemma}\label{galclos}
\begin{enumerate}[\rm (i)]
  \item An \'{e}tale covering of connected varieties is curve-tame if and only if its Galois closure is curve-tame.
  \item An \'{e}tale covering of normal, connected varieties is valuation-tame if and only if its Galois closure is valuation-tame.
\end{enumerate}
\end{lemma}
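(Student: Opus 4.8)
The plan is to prove both equivalences at once. Write $f\colon Y\to X$ for the given covering, let $\pi\colon \tilde Y\to X$ be its Galois closure, and recall that $\pi$ factors as $\tilde Y\xrightarrow{\,g\,}Y\xrightarrow{\,f\,}X$, that $g$ is a \emph{surjective} étale covering, and that $\tilde Y$ may be realised as a connected component of an $n$-fold fibre product $Y\times_X\cdots\times_X Y$ for suitable $n$. In the situation of (ii) the scheme $\tilde Y$ is moreover normal, being étale over the normal scheme $X$ \cite[Ch.\ VII, Prop.\,2]{Ra70}, and connected by construction, so the tameness notions below apply to it. The implication ``$\pi$ is tame $\Rightarrow$ $f$ is tame'', in either variant, is then immediate from \cref{compo}: since $\pi=f\circ g$ is tame, $g$ is tame, and $g$ being surjective, $f$ is tame.

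For the converse in case (ii) I would argue with function fields only. Under the correspondence between connected étale coverings of $X$ and open subgroups of $\pi_1(X)$, the function field $k(\tilde Y)$ is the Galois closure of the finite separable extension $k(Y)/k(X)$; that is, $k(\tilde Y)$ is the compositum, inside a fixed algebraic closure of $k(X)$, of the finitely many $k(X)$-conjugates of $k(Y)$. Let $v$ be an arbitrary $k$-valuation of $k(X)$; by hypothesis $v$ is tamely ramified in $k(Y)/k(X)$. Each conjugate of $k(Y)$ is $k(X)$-isomorphic to $k(Y)$, hence $v$ is tamely ramified in it as well, and since the compositum of finitely many finite subextensions of $\overline{k(X)}/k(X)$ that are tamely ramified at $v$ is again tamely ramified at $v$, it follows that $v$ is tamely ramified in $k(\tilde Y)/k(X)$. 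As $v$ was arbitrary, $\tilde Y\to X$ is valuation-tame.

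For the converse in case (i) I would reduce to the same input by a formal fibre-product argument. The point is that, for a fixed regular curve $C$, the class of étale coverings of $C$ which are tamely ramified along $P(C)\smallsetminus C$ is closed under fibre products over $C$ and under passage to open-and-closed subschemes. Stability under open-and-closed subschemes is clear from the definition; for fibre products one observes that $C'\times_C C''$ is again étale over the regular curve $C$, hence regular, and that the function field of each of its connected components is, inside $\overline{k(C)}$, a compositum of a factor of $k(C')$ with a $k(C)$-conjugate of a factor of $k(C'')$, so tameness at the discrete rank-one valuation $v_x$ for $x\in P(C)\smallsetminus C$ follows from the compositum statement used above. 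Granting this, let $C\to X$ be any morphism from a regular curve. Curve-tameness of $f$ says that $Y\times_X C\to C$ is tamely ramified along $P(C)\smallsetminus C$; iterating the fibre-product stability shows that $(Y\times_X\cdots\times_X Y)\times_X C\to C$, which is the $n$-fold fibre power of $Y\times_X C$ over $C$, is tamely ramified along $P(C)\smallsetminus C$ as well; and finally $\tilde Y\times_X C\to C$ is tamely ramified along $P(C)\smallsetminus C$, being an open-and-closed subscheme of $(Y\times_X\cdots\times_X Y)\times_X C$ over $C$. Since $C$ was arbitrary, $\tilde Y\to X$ is curve-tame.

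The only substantive input in all of this is the stability of tame ramification under composita: if $M_1/k(X)$ and $M_2/k(X)$ are finite separable subextensions of $\overline{k(X)}$ that are tamely ramified at a valuation $v$ of $k(X)$, then $M_1M_2/k(X)$ is tamely ramified at $v$. I expect this to be the only place where care is needed — for the discrete rank-one valuations appearing in (i) it is a form of Abhyankar's lemma, and for the arbitrary $k$-valuations of (ii) it follows from the fact that the maximal tamely ramified extension of the Henselization of $(k(X),v)$ is a field. I would record this as a short preliminary lemma, or cite \cite{EP05} resp.\ \cite{KS10}, rather than reprove it here.
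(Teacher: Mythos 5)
Your proof is correct and follows essentially the same route as the paper: \cref{compo} gives the easy direction, and the converse is obtained by realising the Galois closure as a connected component of a fibre power of $Y$ over $X$ and using that (curve-/valuation-)tameness is stable under fibre products. The only differences are that you make explicit the key input --- stability of tame ramification under composita --- which the paper subsumes under ``directly from the definition'', and that in case (ii) you argue with function fields and conjugates rather than with fibre products; both are harmless variations.
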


\begin{proof} If the Galois closure  $\widetilde Y \to X$ of $Y \to X$ is curve-tame, then $Y\to X$ is curve-tame by \cref{compo}.
Directly from the definition we see that for curve-tame coverings $Y_1\to X$, $Y_2\to X$ also their fibre product $Y_1 \times_X  Y_2\to X$ is curve-tame. Now the Galois closure $\widetilde Y \to X$  of $Y\to X$ occurs as a connected component (of maximal degree) of the $d$-fold self product $Y\times_X \cdots \times_X Y$ with $d=\deg (Y/X)$. Hence $\widetilde Y \to X$ is curve-tame if $Y\to X$ is.

The same arguments apply for valuation-tameness.
\end{proof}

The main result of this section is

\begin{theorem}\label{tamethm2}
Let $k$ be a field of positive characteristic and let $f: Y \to X$ be a finite, \'{e}tale morphism of regular $k$-varieties. Then $f$ is curve-tame if and only if it is valuation-tame.
\end{theorem}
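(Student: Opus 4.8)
The plan is to prove the two implications separately, with the forward direction (valuation-tame $\Rightarrow$ curve-tame) being essentially formal and the reverse direction (curve-tame $\Rightarrow$ valuation-tame) being the substantive one, where \cref{qp-sharp} enters. For the forward direction, suppose $f$ is valuation-tame and let $C \to X$ be a morphism from a regular curve; I would like to conclude that $Y \times_X C \to C$ is tamely ramified along $P(C) \smallsetminus C$. After replacing $C$ by its image closure (a curve in $X$, possibly singular) and then by a dominant map from a regular curve, the point is that a closed point $x \in P(C) \smallsetminus C$ gives a discrete rank one valuation $v_x$ on $k(C)$, which restricts to a $k$-valuation on the function field of the image curve and hence extends to a $k$-valuation $v$ on $K = k(X)$; valuation-tameness of $f$ bounds the ramification in $k(Y)/k(X)$ over $v$, and a standard multiplicativity-of-ramification-indices argument (using that $k(C)/k(\text{image})$ contributes no wild part at the relevant places, or absorbing it) transfers this to tameness of $Y\times_X C \to C$ at $x$. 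One subtlety: the fibre product $Y \times_X C$ need not be normal or connected, but tameness is checked on the normalization of each component, and the relevant residue extensions are controlled by $k(Y)/k(X)$.

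For the reverse direction, suppose $f: Y \to X$ is curve-tame but, for contradiction, not valuation-tame. Then there is a $k$-valuation $w$ on $L = k(Y)$ with restriction $v$ to $K = k(X)$ such that $w/v$ is wildly ramified. By \cref{galclos} we may pass to the Galois closure and assume $Y \to X$ is Galois; wild ramification of $w/v$ persists (the ramification group of $w$ in the bigger Galois group surjects onto a nontrivial $p$-group, or the subextension $w/v$ is already wild, and tameness is inherited by subextensions via multiplicativity, so assuming Galois only helps). Now apply \cref{qp-sharp} with the étale model $U = X$: since $U_L = Y \to U = X$ is étale, we obtain a model $X'$ of $K$ with an open immersion $j: X \hookrightarrow X'$ and a prime divisor $D \subset X'_L \smallsetminus Y$ ramified in $X'_L \to X'$. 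The key point must be that this ramification is \emph{wild}: indeed, the associated geometric rank one valuation $W$ of $L$ restricting to $V$ on $K$ is obtained — tracing through the proofs of \cref{qp2} and \cref{qp-sharp} — from the wildly ramified $w$ via inseparable base change and Zariski–Nagata purity, both of which preserve the $p$-Sylow (ramification) part by \cref{ramfilt}. Hence $W/V$ is wildly ramified.

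The final step converts the wildly ramified geometric valuation $V$ on $K$ into a curve contradicting curve-tameness. Since $V$ is geometric of rank one, $\O_V = \O_{X',D}$ for the prime divisor $D$, and $\mathrm{deg.tr}_k(KV) = d - 1$. I would choose a curve $C \to X'$ meeting $D$ transversally at a regular point of both $D$ and $X'$, lying in $X$ away from $D \cap (\text{other bad locus})$ — concretely, a general complete-intersection curve through a general point of $D$ — so that the valuation $v_x$ of $k(C)$ at the point $x = C \cap D$ induces (a valuation equivalent to) $V$ on $K$ with ramification index one in $k(C)/KV$-direction, i.e.\ so that the ramification of $Y \times_X C \to C$ at $x$ equals the ramification of $W/V$, which is wild. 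This contradicts curve-tameness of $f$. The main obstacle — and the place I expect the real work to be — is exactly this last step: producing a \emph{regular} curve $C$ whose local behavior at one point faithfully reproduces the ramification of the divisorial valuation $V$, without introducing extra ramification from $k(C)/KV$; this is a Bertini-type transversality argument over a possibly imperfect field $k$ of characteristic $p$, so one must be careful that the generic point of $D$ and a general point of it behave well, and that $C$ can be taken regular (using that $X'$ is regular and $D$ is a Cartier divisor, a general curve cut out near a smooth point of $D$ is regular). If a direct Bertini argument is awkward, an alternative is to invoke the equivalence of curve-tameness with tameness tested on traits or on the henselization $\O_{X',D}^h$, reducing to the one-dimensional local situation where curve-tame and valuation-tame coincide by definition.
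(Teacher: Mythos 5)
There is a genuine gap in your reverse direction, at the step where you assert that the divisorial ramification produced by \cref{qp-sharp} is wild. \Cref{qp-sharp} (via \cref{qp2}) obtains its ramified divisor by applying Zariski--Nagata purity to the non-\'{e}tale morphism $X'_L\to X'$; purity only says that the branch locus, being nonempty, is pure of codimension one, so it hands you \emph{some} ramified divisor with no relation to the original valuation $w$. \Cref{ramfilt} does preserve ramification groups under the purely inseparable base change, but it says nothing about the purity step, and that is exactly where control over the $p$-part is lost: the resulting $W/V$ is merely ramified and may perfectly well be tame even though $w/v$ is wild. A tamely ramified divisor yields no contradiction with curve-tameness, so your argument does not close. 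The paper's fix is to first pass from the Galois covering $Y\to X$ to the degree-$p$ subcovering $Y\to Z=Y/G$, where $G\subset R_w$ is cyclic of order $p$; for a Galois covering of degree $p$ in characteristic $p$ any nontrivial inertia group equals its own $p$-Sylow subgroup, so \emph{every} ramified divisor of $\bar Z_{k(Y)}\to \bar Z$ is automatically wildly ramified, and \cref{compo} then transports the failure of curve-tameness of $Y\to Z$ back to $Y\to X$.

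Two further remarks. For the last step (converting the wildly ramified divisor into a curve) the paper does not run a Bertini argument; it invokes the Key Lemma 2.4 of \cite{KS10}, which produces a regular curve $C\to Z$ with $Y\times_Z C\to C$ wildly ramified along $P(C)$ --- your instinct that this is where real work lies is correct, but the work is outsourced to that reference. Likewise, the forward implication (valuation-tame implies curve-tame) is not reproved in the paper but quoted from \cite[Theorem 4.4]{KS10}; your sketch of it glosses over the case where $C$ maps to a closed point of $X$ and over composite valuations, though since a complete reference exists this is a presentational rather than a mathematical issue.
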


\begin{remark}
\Cref{tamethm2}
sharpens \cite[Theorem 4.4]{KS10} (which makes an assumption on the existence of regular, proper models) in the case that the base scheme is the spectrum of a field.
\end{remark}
\begin{proof}[Proof of \cref{tamethm2}]
By \cref{galclos}, we may assume that $f: Y\to X$ is a Galois covering of connected varieties.

Assume that there exists a $k$-valuation $w$ of $k(Y)$ which is wildly ramified in $k(Y)/k(X)$. Let $1\ne R_w\subset \Gal(Y/X)$ be the ramification group of $w$ and let $G\subset R_w$ be a cyclic subgroup of order $p$. Setting $Z=Y/G$, we obtain a cyclic Galois covering $Y/Z$ of order $p$ in which $w$ is (wildly) ramified. By \cref{qp-sharp}, we find a normal, connected variety $\bar{Z}$ containing $Z$ as a dense open subscheme and a prime divisor $D\subset \bar Z$ which ramifies in $\bar Z_{k(Y)}\to  \bar Z$. By the Key Lemma~2.4 of \cite{KS10}, we find a regular curve $C$ and a morphism $C\to Z$ such that the base change $Y\times_{Z}C \to C$ is (wildly) ramified along $P(C)$. Hence $Y\to Z$ is not curve-tame and by \cref{compo}, also $Y\to X$ is not curve-tame. This shows that curve-tameness implies valuation tameness.

The other implication is part of \cite[Theorem 4.4]{KS10}.
\end{proof}

\section{Tame morphisms of adic spaces}\label{spa-sec}
We refer the reader to \cite{Hu96} for basic notions on adic spaces. Following \cite[\S3]{Hue18}, we call an \'{e}tale morphism $\cY \to \cX$ of adic spaces  \emph{tame} if for every point $y\in \cY$ with image $x\in \cX$ the extension of the valuations associated to $y$ and $x$ is (at most) tamely ramified.

\medskip
In \cite{Te11} M.~Temkin associates with a morphism of schemes $X\to S$ a discretely ringed adic space $\Spa(X,S)$. If $X=\Spec(A)$ and $S=\Spec(R)$ are affine, then $\Spa(X,S)$ coincides with Huber's affinoid adic space $\Spa(A,A^+)$, where $A$ is endowed with the discrete topology and $A^+$ is the integral closure of the image of $R$ in $A$.

\medskip
If $S=\Spec(k)$ is the spectrum of a field and $X$ is a variety over $k$, then the underlying set of points of $\Spa(X,k)$ is the following:
\[
|\Spa(X,k)|= \coprod_{x\in X} \Val_k (k(x)) \quad\quad \text{(set-theoretically)}. 
\]
Here  $\Val_k (k(x))$ is the set of $k$-valuations of $k(x)$.

\medskip
An \'{e}tale morphism of $k$-varieties $Y\to X$ induces an \'{e}tale morphism of adic spaces $\Spa(Y,k)\to \Spa(X,k)$, cf.\  \cite{Hu96}. With this preparations  we are ready to prove \cref{tamethm}.

\begin{proof}[Proof of \cref{tamethm}]
By definition, $f$ is curve-tame if and only if for every point $y\in Y_1$ (i.e.\ the closure of $y$ is a curve) with image $x\in X$, the extension $k(y)/k(x)$ is tamely ramified at every  $v\in \Val_k k(y)$. In other words, the tameness of $\Spa(f)$ trivially implies the curve-tameness of $f$.

Conversely, assume that $f$ is curve-tame. Let $y\in Y$ be a point with image $x\in X$ and $v\in \Val_k k(y)$. We have to show that $v$ is tamely ramified in $k(y)/k(x)$. Replacing $X$ by the closure of $x$ and using that curve-tameness is stable under base change, we may assume that $x$ is the generic point of the integral variety~$X$. Furthermore, after replacing $X$ by an open subscheme, we may assume that $X$ (and hence also $Y$) is regular.
Then, by \cref{tamethm2}, $f$ is valuation-tame, hence $v$ is tamely ramified in $k(y)/k(x)$.
\end{proof}

\section*{Acknowledgement}
The author thanks A.~Holschbach for helpful discussions and for providing \cref{holshexample}. Moreover, we thank K.~H\"{u}bner for her comments on a preliminary version of this article.

\end{document}